\newtheorem{theorem}{Theorem}
 \newtheorem{proposition}[theorem]{Proposition}
\theoremstyle{remark}
\newtheorem{definition}{Definition}
\DeclareMathOperator{\Res}{\mathcal{R}}
\DeclareMathOperator*{\res}{\mathrm{res}}
\def\R#1{\mathbb{R}^{#1}}
\def\I{\mathrm{i}}
\def\Log{{\rm Log\,}}
\def\D{{\mathbb D}}
\def\R{{\mathbb R}}
\def\C{{\mathbb C}}
\def\P{{\mathbb P}}
\begin{document}

\title{On the exponential transform of multi-sheeted algebraic domains}
\date{December 30, 2010}

\author[B. Gustafsson]{Bj\"orn Gustafsson}
\email{gbjorn@kth.se}
\address{Mathematical department, KTH, 10044 Stockholm}

\author[V. Tkachev]{Vladimir G. Tkachev}
\email{tkatchev@kth.se}
\address{Mathematical department, KTH, 10044 Stockholm}

\keywords{algebraic domain, quadrature domain, exponential transform, elimination  function, Riemann surface, Klein
surface, Neumann's oval.}
\subjclass{Primary 30F10; Secondary 30F50, 14H05}

\thanks{This research
has been supported by the Swedish Research Council, and is part of
the European Science Foundation Networking programme ``Harmonic and
Complex Analysis and Applications HCAA''}

\begin{abstract}
We introduce multi-sheeted versions of algebraic domains and
quadrature domains, allowing them to be branched covering surfaces
over the Riemann sphere. The two classes of domains turn out to be
the same, and the main result states that the extended exponential
transform of such a domain agrees, apart from some simple factors,
with the extended elimination function for a generating pair of
functions. In an example we discuss the algebraic curves associated
to level curves of the Neumann oval, and determine which of these
give rise to multi-sheeted algebraic domains.
\end{abstract}

\maketitle


\section{Introduction}

Extending some previous work \cite{Putinar94}, \cite{Putinar96},
\cite{Putinar98}, \cite{Gustafsson-Tkachev09},
\cite{Gustafsson-Tkachev10} on the rationality of the exponential
transform we here go on to consider what we believe is the most
general type of domains for which the exponential transform, in an
extended sense, can be expected to have a ``core'' consisting of a
rational function. Around this core there will then be
``satellites'' of some rather trivial factors, depending on the
regimes at the locations of the independent variables.

The ``domains'' we consider will actually be covering surfaces over
the Riemann sphere. The terminology ``quadrature Riemann surface''
has already been introduced by M.~Sakai \cite{Sakai88} for the type
of domains in question. An alternative name could be ``multi-sheeted
algebraic domain'', to extend a terminology used by A.~Varchenko and
P.~Etingof \cite{Varchenko-Etingof94}. In the present work we shall
mostly use the latter terminology because we will not emphasize so
much the quadrature properties, but rather take as a starting point
the way the domains are generated, namely by a pair of meromorphic
function on a compact Riemann surface.

Given a bounded domain $\Omega\subset \mathbb{C}$, the traditional
exponential transform \cite{Carey-Pincus74}, \cite{Putinar94},
\cite{Putinar96}, \cite{Putinar98}, \cite{Gustafsson-Putinar98} of
$\Omega$ is the function of two complex variables defined by
\begin{equation*}
E_\Omega (z,w)=\exp[\frac{1}{2\pi\I}\int_\Omega
\frac{d\zeta}{\zeta -z}\wedge \frac{d\bar{\zeta}}{\bar{\zeta} -\bar {w}}] \quad (z,w\in \C).
\end{equation*}
For the unit disk $\mathbb{D}$ it is (see
\cite{Gustafsson-Putinar98}):
\begin{equation}\label{et3}
E(z,w)=
\begin{cases}
1-\frac{1}{z\bar w} \quad &(z,w\in \mathbb{C}\setminus \overline{\mathbb{D}}),\\
1-\frac{\bar z}{\bar w}   \quad  &(z\in\D,\, w\in \mathbb{C}\setminus \overline{\mathbb{D}}),\\
1-\frac{w}{z} \quad &(z\in \mathbb{C}\setminus \overline{\mathbb{D}},\,w\in \D,\\
\frac{|z-w|^2}{1-z\bar w} \quad &(z,w\in \mathbb{D}).\\
\end{cases}
\end{equation}
Using the Schwarz function \cite{Davis74}, \cite{Shapiro92} for
$\partial\D$,
$$
S(z)=\frac{1}{z},
$$
(\ref{et3}) can be written
\begin{equation}\label{expelimD}
E(z,w)=(\frac{\bar z -\bar w}{S(z)-\bar w})^{\rho(z)} (\frac{ z -
w}{z-\overline{S(w)}})^{\rho(w)} \mathcal{E} (z,\bar w),
\end{equation}
where $\mathcal{E} (z,\bar w)$ is the rational function
$$
\mathcal{E} (z,\bar w)=\frac{z\bar w-1}{z\bar w}
$$
and $\rho=\chi_{\D}$ is the characteristic function of $\D$. The
expression (\ref{expelimD}) reveals the general structure of the exponential
transform of any algebraic domain.

The function $\mathcal{E} (z,\bar w)$ is an instance of the
elimination function, which can be defined by means of the
meromorphic resultant $\Res (f,g)$ of two meromorphic functions $f$
and $g$. The resultant is defined as the multiplicative action of
$g$ on the divisor $(f)$ of $f$,  namely $\Res (f,g)=g((f))$, and
the elimination function is $\mathcal{E}_{f,g} (z, \bar w) =\Res (f
-z, g-\bar w)$. In the case of the unit disk, or any quadrature
domain, the relevant elimination function which enters into the
exponential transform is the one with $f(\zeta)=\zeta$,
$g(\zeta)=S(\zeta)$. This means that we in the present example get
$$
\mathcal{E} (z,\bar w) =\Res (\zeta -z,S(\zeta)-\bar w)
=(S(\zeta)-\bar w)((\zeta-z))
$$
$$
=(S(\zeta)- \bar w)(1\cdot (z) - 1\cdot (\infty))
=\frac{S(z)-\bar w}{S(\infty)-\bar w}= \frac{z\bar w-1}{z\bar w},
$$
as desired.

The aim of the present paper is to generalize the formula (\ref{expelimD}) as far as
possible. This will involve an extended exponential transform in four complex variables and
an analogous extended elimination function in four variables, defined in terms of a conjugate pair of
meromorphic functions on a fairly general compact symmetric Riemann surface.

The paper is organized as follows. Sections~\ref{sec:Cauchy} and
\ref{sec:resultant} contain general preliminary material. In
Section~\ref{sec:multi} we introduce the concepts of multi-sheeted
algebraic domains and quadrature Riemann surfaces and prove that
they are equivalent. The main result is stated in
Section~\ref{sec:main} and proved in Section~\ref{sec:proof}.
Section~\ref{sec:example}, finally, is devoted to examples, namely
the ellipse and Neumann's oval.


\section{The Cauchy and exponential transforms}\label{sec:Cauchy}

The Cauchy transform of a bounded density function $\rho$ in $\C$ is
$$
C_\rho (z)=\frac{1}{2\pi\I}\int \frac{\rho (\zeta)d\zeta \wedge d\bar \zeta}{\zeta-z}.
$$
Typically the functions $\rho$ which will appear in this paper will be like the characteristic function of a domain, or
the corresponding integer valued counting function for a multi-sheeted domain. From
$C_\rho$, the density $\rho$ can be recovered by
\begin{equation}\label{dbarCauchy}
\rho(z)=\frac{\partial C_\rho (z)}{\partial\bar z},
\end{equation}
to be interpreted in the sense of distributions.

If one writes the definition of the Cauchy transform as
$$
C_\rho (z)=\frac{1}{2\pi\I}\int \rho(\zeta)\frac{d\zeta }{\zeta-z}
\wedge d\bar \zeta
$$
one realizes that it suffers from a certain lack of symmetry. A more
balanced object would be the ``double Cauchy transform'',
\begin{equation}\label{doubleCauchy}
C_\rho (z,w)=\frac{1}{2\pi\I}\int \rho(\zeta)\frac{d\zeta }{\zeta-z} \wedge \frac{d\bar \zeta}{\bar \zeta-\bar w}.
\end{equation}
In fact, this double transform is much richer than the original
transform, and after exponentiation it gives the by now quite well
studied \cite{Carey-Pincus74}, \cite{Putinar94}, \cite{Putinar96},
\cite{Putinar98}, \cite{Gustafsson-Putinar98} (etc) exponential
transform:
$$
E_\rho(z,w)=\exp {C_\rho (z,w)}.
$$
The original Cauchy transform can be recovered as
$$
C_\rho(z)=\res_{w=\infty} C_\rho (z,w) =-\lim_{w\to \infty} {\bar w} C_\rho(z,w),
$$
at least if $\rho$ vanishes in a neighborhood of infinity. One disadvantage with the double Cauchy transform is that the formula (\ref{dbarCauchy})
turns into the more complicated
\begin{equation}\label{dbarCauchy1}
\frac{\partial C_\rho (z,w)}{\partial \bar z}=\frac{\rho(z)}{\bar z-\bar w}.
\end{equation}
On the other hand we have the somewhat nicer looking
$$
\frac{\partial^2 C_\rho (z,w)}{\partial \bar z\partial w}=-\pi \rho(z)\delta(z-w),
$$
where $\delta$ denotes the Dirac distribution.

Now, even the double Cauchy transform is not entirely complete. It
contains the Cauchy kernel $\frac{d\zeta}{\zeta-z}$, which is a
meromorphic differential on the Riemann sphere with a pole at
$\zeta=z$, but it has also a pole at $\zeta=\infty$. It is natural
to make the latter pole visible and movable. That would have the
additional advantage that one can avoid the two Cauchy kernels which
appear in the definitions of the double Cauchy transform and the
exponential transform to have coinciding poles (namely at infinity).
Thus we arrive naturally at the extended Cauchy and exponential
transforms:
\begin{equation}\label{extendedCauchy}
C_\rho(z,w;a,b)=\frac{1}{2\pi\I}\int \rho(\zeta)(\frac{d\zeta }{\zeta-z} -\frac{d\zeta }{\zeta-a})
\wedge (\frac{d\bar \zeta}{\bar \zeta-\bar w}-\frac{d\bar \zeta}{\bar \zeta-\bar b}),
\end{equation}
\begin{equation}\label{extendedexp}
E_\rho(z,w;a,b)=\exp { C_\rho(z,w;a,b)}=\frac{E_\rho(z,w)E_\rho(a,b)}{E_\rho(z,b)E_\rho(a,w)}.
\end{equation}
If the points $z$, $w$, $a$, $b$ are taken to be all distinct, then
both transforms are well defined and finite for any bounded density
function $\rho$ on the Riemann sphere. For example, with
$\rho\equiv 1$, $E_\rho(z,w;a,b)$ turns out to be the modulus
squared of the cross-ratio. See \cite{Gustafsson-Tkachev09} for
further details.


\section{The resultant and the elimination function}\label{sec:resultant}

Here we shall briefly review the definitions of the meromorphic resultant and the
elimination function, as introduced in \cite{Gustafsson-Tkachev09},  referring to that paper
for any details.
If $f$ is a meromorphic function on any compact Riemann surface $M$ we denote by $(f)$ its divisor of zeros and poles,
symbolically $(f)=f^{-1}(0)-f^{-1}(\infty)$. If $D$ is any divisor and $g$ is a meromorphic function
we denote by $g(D)$ the multiplicative action of $g$ on $D$. For example, if $D=1\cdot (a)+1\cdot (b)-2\cdot (c)$,
$a,b,c\in M$, then $g(D)=\frac{g(a)g(b)}{g(c)^2}$. Now the meromorphic resultant between $f$ and $g$ is, by definition,
$$
\Res (f,g)=g((f))
$$
whenever this makes sense.

The elimination function is
$$
\mathcal{E}_{f,g}(z,w)=\Res(f-z,g-w),
$$
where $z,w\in\C$ are parameters. It is always a rational function in $z$ and $w$,
more precisely of the form
\begin{equation}\label{QPR}
\mathcal{E}_{f,g}(z,w)=\frac{Q(z,w)}{P(z)R(w)},
\end{equation}
where $Q$, $P$ and $R$ are polynomials,
and it embodies the necessary (since $M$
is compact) polynomial relationship between $f$ and $g$:
$$
\mathcal{E}_{f,g}(f(\zeta),g(\zeta))=0 \quad (\zeta\in M).
$$
We also have the extended elimination function, defined by
$$
\mathcal{E}_{f,g}(z,w; a, b)=\Res(\frac{f-z}{f-a},\frac{g-w}{g-b}).
$$

To relate the elimination function to the exponential transform one
needs integral formulas for the elimination function. If $f$ is
meromorphic on $M$ with divisor $(f)$, let $\sigma_f$ be a $1$-chain
such that $\partial \sigma_f=(f)$ and such that $\log f$ has a
single-valued branch, which we denote $\Log f$, in $M\setminus{\rm
supp\,} \sigma_f$. Then $\Log f$ can be viewed as a distribution on
$M$, and its exterior differential in the sense of distributions (or
currents) is
\begin{equation}\label{dLogf}
d\Log f = \frac{df}{f} -2\pi\I dH_{\sigma_f}.
\end{equation}
Here $dH_{\sigma_f}$ is the $1$-form current supported by $\sigma_f$ and defined locally, away from $\partial \sigma_f$,
as the differential (in the sense of currents) of that function $H_{\sigma_f}$ which is $+1$ on the right-hand side of
$\sigma_f$, zero on the left-hand side. Globally $dH_{\sigma_f}$ is not exact (despite the notation), not even closed.
To be precise,
$$
d(dH_{\sigma_f})=\frac{1}{2\pi\I}d(\frac{df}{f})=\delta_{(f)}dx\wedge dy,
$$
where $\delta_{(f)}$ denotes the finite distribution of point masses
(or charges) corresponding to $(f)$. We shall also need the fact
that $dH_{\sigma_f}$ has the period reproducing property
\begin{equation}\label{inttau}
\int_M dH_{\sigma_f}\wedge \tau =\int_{\sigma_f} \tau,
\end{equation}
holding for any smooth $1$-form $\tau$.

Now we have (essentially Theorem~2 in \cite{{Gustafsson-Tkachev09}})
\begin{equation}\label{elimint}
\mathcal{E}_{f,g}(z,w;a,b)= \exp[\frac{1}{2\pi \I}\int_M
(\frac{df}{f-z}-\frac{df}{f-a})\wedge d\, \Log \frac{g-w}{g-b}].
\end{equation}
The integrand is a $2$-form current with support on the $1$-chains
$\sigma_{g-w}$ and $\sigma_{g-b}$ (because away from these curves
the integrand contains $d\zeta\wedge d\zeta$), so the integral is
rather a line integral than an area integral. In fact, the above can
also be written
$$
\mathcal{E}_{f,g}(z,w;a,b)= \exp[\int_{\sigma_{g-w}} (\frac{df}{f-z}-\frac{df}{f-a})-\int_{\sigma_{g-b}} (\frac{df}{f-z}-\frac{df}{f-a})],
$$
which perhaps clarifies the connection to the definition of the elimination function.


\section{Multi-sheeted algebraic domains}\label{sec:multi}

The boundary of a quadrature domain (algebraic domain) is an
algebraic curve, but by no means every algebraic curve arises in
this way. However, the gap between the two classes of objects can be
reduced considerably by extending the notion of a quadrature domain,
allowing it to have several sheets and to be branched over the
Riemann sphere. This will take essentially one half of all algebraic
curves into the framework of quadrature domains and exponential
transforms. One  step in this direction was taken in Sakai
\cite{{Sakai88}}, where a notion of quadrature Riemann surface was
introduced in a special case. Below we shall take some further
steps.

Let $M$ be any symmetric compact (closed) Riemann surface. Slightly
more generally, we shall allow $M$ to be disconnected, namely to be
a finite disjoint union of Riemann surfaces. The symmetry means that
$M$ is provided with an anticonformal involution $J:M\to M$, $J\circ
J={\rm identity}$. If $M$ is disconnected then $J$ is allowed to map
one component of $M$ onto another. Let $\Gamma$ denote the set of
fixed points of $J$. Simple examples of symmetric Riemann surfaces
are $M=\mathbb{P}$ (the Riemann sphere) with the involution being
either $J_1(\zeta)=1/\bar{\zeta}$ or $J_2(\zeta)=-1/\bar{\zeta}$. In
the first case $\Gamma=\{\zeta: |\zeta|=1\}$ and $M\setminus\Gamma$
has two components, in the second case $\Gamma$ is empty and hence
$M\setminus\Gamma$ has only one component. One can also think of
identifying the points $\zeta$ and $J(\zeta)$. The identification
spaces become, in the first case ($J=J_1$) the unit disk together
with its boundary, and in the second case ($J=J_2$) the projective
plane, thus a nonorientable surface.

In general, the orbit space $N=M/J$, obtained  by identifying
$\zeta$ and $J(\zeta)$ for any $\zeta\in M$, is a Klein surface,
possibly with boundary. A Klein surface \cite{Alling-Greenleaf71} is
defined in the same way as a Riemann surface except that it is
allowed to be nonorientable and that both holomorphic and
antiholomorphic transition functions between coordinates are
allowed. The possible boundary points of $N$ are those coming from
$\Gamma$ under the identification. From the Klein surface $N$, $M$
can be recovered by a natural doubling procedure, described in
Section~2.2 of \cite{Schiffer-Spencer54} and in
\cite{Alling-Greenleaf71}, for example. The latter reference
actually describes several types of doubles (the complex double, the
orienting double and the Schottky double), but the description in
\cite{Schiffer-Spencer54} will be enough for our purposes. In case
$N$ is orientable and has a boundary the doubling procedure gives
what is usually called the Schottky double, named after the inventor
of the idea, F.~Schottky  \cite{Schottky77}. The idea was later
extended to more general surfaces by F. Klein \cite{Klein91}.

Continuing the discussion of $M$, $J$ and $N$, if $M$ is connected
but $M\setminus \Gamma$ disconnected, then $M\setminus \Gamma$ has
exactly two components, say $M_+$ and $M_-$, and $J$ maps each of
them onto the other. It then follows that  $(M\setminus\Gamma)/J$
can be identified with $M_+$ (or $M_-$), and in particular that $N$
is orientable, hence is (after choice of orientation) an ordinary
Riemann surface with boundary. If $M\setminus \Gamma$ is connected
then $N$ necessarily is nonorientable.

It is relevant to allow $M$ to have several components. For example,
in Section~\ref{sec:example} we will encounter the double of the
Riemann sphere, which simply is two Riemann spheres with the
opposite conformal structure and with $J$ mapping one onto the
other.

Now to the definition of ``multi-sheeted algebraic domain''. There
are two ingredients. The first is a compact symmetric Riemann
surface $(M,J)$ such that $M$ is connected and such that
$M\setminus\Gamma$ has two components, one of which, call it $M_+$,
is to be selected. We could equally well have started with $M_+$, to
be any Riemann surface with boundary (bordered Riemann surface), and
then let $M$ be the double of $M_+$. The second ingredient is a
nonconstant meromorphic function $f$ on $M$.

\begin{definition}\label{def:multi}
A  {\em multi-sheeted algebraic domain} is (represented by) a pair
$(M_+,f)$, where $M_+$ is a bordered Riemann surface and $f$ is a
nonconstant meromorphic function on the double $M$ of $M_+$. Two
pairs, $(M_+,f)$ and $(\tilde{M}_+, \tilde{f})$ are considered the
same if there is a biholomorphic mapping $\phi:M\to \tilde{M}$ such
that $\phi\circ J=\tilde{J}\circ \phi$ and $f=\tilde{f}\circ \phi$.
\end{definition}

The equivalence simply means that it is the image $f(M_+)$, with
appropriate multplicities, which counts. Note that also $(M_-,f)$ is
a multi-sheeted algebraic domain, if $(M_+,f)$ is. Trivial examples
of a multi-sheeted algebraic domain are obtained by taking $M=\P$
and $J(\zeta)=1/\bar{\zeta}$. Then with $f$ any nonconstant rational
function $(\D,f)$ will be a multi-sheeted algebraic domain, as well
as $(\P\setminus\overline{\D},f)$. Some further examples will be
discussed in Section~\ref{sec:example}.

Along with $f$, meromorphic on $M$, the symmetry $J$ provides one
more meromorphic function on $M$, namely
$$
f^*=\overline{(f\circ J)}.
$$
With $Q(z,w)$ the polynomial in (\ref{QPR}) for $g=f^*$, the map
$M\ni \zeta\mapsto(f(\zeta),f^*(\zeta))$ parametrizes the curve
$Q(z,\bar w)=0$ (or, better, its projective counterpart). This
parametrization is one-to-one (except for finitely many points) if
and only if $f$ and $f^*$ generate the field of meromorphic
functions on $M$ (form a primitive pair in the terminology of
\cite{Farkas-Kra80}). This will generically be the case if $f$ is
chosen ``at random'', but there are certainly many counterexamples.
For example, $f$ may be already symmetric in itself, i.e., $f=f^*$,
and then $f$ and $f^*$ is a primitive pair only if $M$ is the
Riemann sphere and $f$ is a M\"obius transformation. If $f$ and
$f^*$ are not a primitive pair, then the polynomial $Q(z,w)$ in
(\ref{QPR}) is reducible.

In case $M_+$ is planar (i.e., is topologically equivalent to a
planar domain) and $f$ is univalent on $M_+$ without poles on
$M_+\cup\Gamma$, then $\Omega=f(M_+)$ is an ordinary algebraic
domain, in other words a classical quadrature domain. In this case
$f$ and $f^*$ do form a primitive pair (see \cite{Gustafsson83}).
The reference to ``quadrature'' can be explained in terms of a
residue calculation. In the present generality it is natural to use
the spherical metric on the Riemann sphere in place of the customary
Euclidean metric. This will remove some integrability problems, and
the point of infinity can be treated on the same footing as other
points. Quadrature domains for the spherical measure have been
previously discussed, at least in fluid dynamic contexts, for
example in \cite{Varchenko-Etingof94} (Hele-Shaw flow),
\cite{Crowdy-Cloke03} (vortex patches).

Let $h$ be a function holomorphic in a neighborhood of
$M_+\cup\Gamma$ and assume, for simplicity, that $f$ has no poles on
$\partial M_+$ (such poles will actually cause no problems anyway).
At least away from poles of $f$ we have, by exterior
differentiation,
$$
d\left(\frac{h\bar{f}
df}{1+f\bar{f}}\right)=
\frac{hd\bar{f}\wedge
df}{(1+f\bar{f})^2}.
$$
Therefore, if $f$ has poles of orders $n_j$ at $a_j\in M_+$ then
$$
\frac{1}{2\pi\I}\int_{M_+} \frac{hd\bar{f}\wedge
df}{(1+f\bar{f})^2}
=\lim_{\varepsilon\to 0}
\frac{1}{2\pi\I}\int_{M_+\cap\{|f|<1/\varepsilon\}} \frac{hd\bar{f}\wedge
df}{(1+f\bar{f})^2}=
$$
$$
=\frac{1}{2\pi\I}\int_{\partial M_+} \frac{h\bar{f} df}{1+f\bar{f}}
-\lim_{\varepsilon\to 0}\frac{1}{2\pi\I}\oint_{|f|=1/\varepsilon} \frac{h\bar{f} df}{1+f\bar{f}}=
$$
$$
=\frac{1}{2\pi\I}\int_{\partial M_+} \frac{h{f^*} df}{1+f f^*}+ \sum_j n_j h(a_j)
=\sum_{M_+}\res \frac{h{f^*} df}{1+f f^*}+ \sum_j n_j h(a_j) .
$$
Here it turns out that the terms $\sum n_j h(a_j)$ cancel with corresponding terms (with negative sign) in the residue contribution
unless $f^*$ happens to have zeros at the points $a_j$. In any case, with or without
cancellations, the right member above is of the form
$$
L(h)=\sum_{k=1}^m\sum_{j=0}^{n_k-1} c_{kj}h^{(j)}(b_k),
$$
i.e., equals the action on $h$ by a distribution with support in
finitely many points. The quadrature nodes $b_k$ are the poles of
$\frac{f^* df}{1+ff^*}$.

If $f$ is univalent the above identity becomes an ordinary
quadrature identity (although for the spherical measure) of the form
\begin{equation}\label{quadratureidentity}
\frac{1}{2\pi\I}\int_{\Omega} g(z) \frac{d\bar{z}\wedge
dz}{(1+|z|^2)^2} =\tilde{L}(g),
\end{equation}
holding for all integrable analytic functions $g$ in $\Omega=f(M_+)$. Here
the right member is given by $\tilde{L}(g)=L(g\circ f)$,
which still is the action of a distribution with finite support.

When $f$ is not univalent one should still think of the quadrature
identity in the same way as in (\ref{quadratureidentity}), just with
the difference that $\Omega$ is a region with several sheets over
the Riemann sphere. The test functions should be allowed to take
different values at points lying above one and the same point, but
on different sheets. Thus those of the form $g\circ f$ (i.e., those
which would become $g(z)$ in a formulation like
(\ref{quadratureidentity})) are too special. This is most easily
expressed by pulling everything back to $M_+$, in which case we
simply have the originally obtained identity
\begin{equation}\label{qi}
\frac{1}{2\pi\I}\int_{M_+} h\frac{d\bar{f}\wedge
df}{(1+|f|^2)^2} =L(h),
\end{equation}
for 
$h$ holomorphic in a neighborhood of $M_+\cup\Gamma$, and by
approximation for functions $h$ holomorphic and integrable (with
respect to $\frac{1}{2\pi\I}\frac{d\bar f\wedge df}{(1+|f|^2)^2}$)
in $M_+$. An equivalent formulation is that there exists a positive
divisor $D$ in $M_+$ such that
\begin{equation}\label{qiD}
\frac{1}{2\pi\I}\int_{M_+} h\frac{d\bar{f}\wedge
df}{(1+|f|^2)^2} =0
\end{equation}
holds for every holomorphic and integrable function $h$ in $M_+$
with $(h)\geq D$.

\begin{definition}
A  {\em quadrature Riemann surface (for the spherical metric)} is a
pair $(M_+,f)$, where $M_+$ is a bordered Riemann surface and $f$ is
a nonconstant meromorphic function on $M_+$ such that
$\frac{1}{2\pi\I}\int_{M_+} \frac{d\bar{f}\wedge
df}{(1+|f|^2)^2}<\infty$ and such that (\ref{qi}) (or (\ref{qiD}))
holds for some $L$ (respectively $D$) and the indicated classes of
functions $h$.
\end{definition}

The notion of equivalence between pairs is the same as in
Definition~\ref{def:multi}. If $f$ is meromorphic on $M$ then we
have
$$
\frac{1}{2\pi\I}\int_{M_+} \frac{d\bar{f}\wedge df}{(1+|f|^2)^2}
\leq \frac{1}{2\pi\I}\int_{M} \frac{d\bar{f}\wedge df}{(1+|f|^2)^2}
=\frac{n}{2\pi\I}\int_{\P} \frac{d\bar{z}\wedge
dz}{(1+|z|^2)^2}=n<\infty,
$$
where $n$ is the order of $f$. Thus we have proved one direction of
the following.

\begin{proposition}
A pair $(M_+,f)$ is a multi-sheeted algebraic domain if and only if it is a quadrature Riemann surface.
\end{proposition}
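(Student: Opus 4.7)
The forward direction is already in hand: the derivation preceding the statement showed that a meromorphic $f$ on $M$ produces (\ref{qi}) with the displayed finite-point $L$, and integrability of the area form is bounded by $\ord f$. What is left is the converse.

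Assume $(M_+,f)$ is a quadrature Riemann surface. I want to extend $f$ to a meromorphic function on the double $M$. By Schwarz reflection across the smooth real-analytic curve $\Gamma=\mathrm{Fix}(J)$, this reduces to producing a meromorphic $f^*$ on $M_+$ whose boundary values on $\Gamma$ coincide with $\overline{f}$: then the prescription $f(J\zeta):=\overline{f^*(\zeta)}$ defines $f$ on $M_-$, the two definitions agree on $\Gamma$, and a standard removable-singularity argument on the real-analytic curve $\Gamma$ upgrades continuity to meromorphy on all of $M$. The function $f^*$ is the Schwarz function of $f$ relative to $(M,J)$.

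To construct $f^*$, I would feed into (\ref{qi}) the test function $h_w(\zeta):=1/(f(\zeta)-w)$, which is holomorphic in a neighbourhood of $M_+\cup\Gamma$ whenever $w\not\in f(M_+\cup\Gamma)$:
$$
\frac{1}{2\pi\I}\int_{M_+}\frac{d\bar f\wedge df}{(f-w)(1+|f|^2)^2}=L\Bigl(\frac{1}{f-w}\Bigr)=:R(w).
$$
Because $L$ is a finite-point distribution, $R(w)$ is a rational function of $w$. Via push-forward under $f$, the left-hand side is (a constant multiple of) the spherical Cauchy transform of the counting measure $n_{M_+}(w)\,d\sigma(w)$, where $n_{M_+}(w)$ is the multiplicity of the map $f\colon M_+\to\P$ at $w$. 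Rationality of this spherical Cauchy transform off the image of $M_+\cup\Gamma$ is precisely the defining condition for a quadrature domain for the spherical measure on $\P$; by the classical Schwarz-function argument in this setting (cf.\ \cite{Sakai88,Gustafsson83}) one recovers a meromorphic $f^*$ on $M_+$ with $f^*|_\Gamma=\overline{f}|_\Gamma$.

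The main obstacle is executing the last step when $M_+$ is non-planar. In the planar case, rationality of the Cauchy transform on the complement of $\Omega$ is well known to imply the existence of a meromorphic Schwarz function on $\Omega$. For a general bordered Riemann surface $M_+$ one either uniformizes (reducing locally to the planar argument) or invokes a Cauchy-kernel construction on the symmetric compact double $M$, represented by a third-kind differential with poles at $\zeta_0\in M_+$ and $J(\zeta_0)\in M_-$; the $J$-symmetry of this kernel on $\Gamma$ automatically delivers the boundary-matching $f^*=\bar f$ on $\Gamma$. Once $f^*$ is secured, uniqueness of meromorphic continuation closes the argument.
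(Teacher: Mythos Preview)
Your main line of argument has a genuine gap. The test functions $h_w(\zeta)=1/(f(\zeta)-w)$ are all of the form $g\circ f$ with $g$ defined on $\P$; they see only the push-forward of the area form under $f$, i.e.\ the weighted density $\rho$ on the sphere, and not the sheet structure of $M_+$. The paper explicitly warns about this just before introducing (\ref{qi}): test functions that factor through $f$ ``are too special''. What your computation actually produces is that the spherical Cauchy transform of $\rho$ is rational off the image, hence a (multi-valued) Schwarz function $S(z)$ for the projected curve $f(\Gamma)\subset\P$. But when $f$ is not univalent you cannot simply set $f^*:=S\circ f$: the function $S$ is genuinely multi-valued, and selecting branches consistently on $M_+$ is exactly the problem you are trying to solve. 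So the sentence ``one recovers a meromorphic $f^*$ on $M_+$'' is where the argument breaks, and the references you cite treat only the univalent (planar) situation.

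The paper's proof avoids this by working intrinsically on $M$. It reformulates the quadrature identity as the vanishing of $\int_{\partial M_+} h\,\frac{f^*\,df}{1+ff^*}$ for all $h\in\mathcal{O}_D(M_+\cup\Gamma)$, then uses Serre duality ($H^1(M,\mathcal{O}_{-D}^{1,0})=0$) to write $\frac{f^*\,df}{1+ff^*}=\omega_+-\omega_-$ with $\omega_\pm$ meromorphic on $M_\pm\cup\Gamma$, and finally invokes the K\"othe--Silva--Grothendieck--Auderset duality to conclude from the vanishing pairing that $\omega_-$ extends to all of $M$. This forces $f^*$ to be meromorphic on $M_+\cup\Gamma$, hence $f$ on all of $M$. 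Your closing remark about using a third-kind Cauchy kernel on the double $M$ with poles at $\zeta_0$ and $J(\zeta_0)$ is exactly the alternative the paper sketches at the end of its proof (the kernel $\Phi(z,\zeta;z_0,\zeta_0)\,d\zeta$), and that route does work---but it must be carried out with test functions living on $M_+$, not on $\P$, and you have not done so. If you want to rescue your write-up, drop the $1/(f-w)$ argument and develop the Cauchy-kernel argument on $M$ directly: feed $h(\,\cdot\,)=\Phi(\,\cdot\,,\zeta;z_0,\zeta_0)\,d\zeta$ with $\zeta\in M_-$ into (\ref{qi}) and show that the resulting function of $\zeta$ provides the meromorphic extension of $f^*$ to $M_-$.
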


\begin{proof}
It remains to prove that a quadrature Riemann surface is a
multi-sheeted algebraic domain. In many special cases this has
already been done (for the Euclidean metric), see for example
\cite{Aharonov-Shapiro76}, \cite{Gustafsson83}, \cite{Sakai88}. We
shall discuss here the general case under the simplifying assumption
that $f$ is meromorphic in a neighborhood of $M_+\cup \Gamma$.

It will be convenient to introduce some further notation. With
$A\subset M$ any subset and $D$ any divisor on $M$ we denote by
$\mathcal{O}_D (A)$ the set of functions $h$ meromorphic in a
neighborhood of $A$ and with $(h)\geq D$. Similarly,
$\mathcal{O}_D^{1,0} (A)$ denotes the set of $1$-forms $\omega$,
meromorphic in a neighborhood of $A$ and satisfying $(\omega)\geq
D$. We shall also use standard notations for cohomology groups.

Now, returning to the previous residue calculation one realizes that
what needs to be proven is that if
\begin{equation}\label{qibdryD}
\frac{1}{2\pi\I}\int_{\partial M_+} h\frac{\bar{f} df}{1+f \bar f} =0
\end{equation}
holds for all $h\in\mathcal{O}_D(M_+\cup\Gamma)$, for some
sufficiently large divisor $D$, then $f$ extends to a meromorphic
function on $M$. On $\Gamma$ we have
$$
\frac{\bar{f} df}{1+f \bar f}=\frac{{f^*} df}{1+f  f^*},
$$
where the right member is holomorphic in a neighborhood of $\Gamma$
and can be viewed as representing an element in the cohomology group
$H^1(M,\mathcal{O}_{-D}^{1,0})$. When $D$ is strictly positive this
group is trivial since, by Serre duality \cite{Serre55},
\cite{Forster77},  $H^1(M,\mathcal{O}_{-D}^{1,0})\cong
H^0(M,\mathcal{O}_{D})^*=\mathcal{O}_{D}(M)^*=0$ (here star $^*$
denotes dual space), hence there exist
$\omega_{\pm}\in\mathcal{O}_{-D}^{1,0}(M_{\pm}\cup\Gamma)$ such that
$$
\frac{{f^*} df}{1+f  f^*}=\omega_+ -\omega_-
$$
in a neighborhood of $\Gamma$.

Clearly
$$
\frac{1}{2\pi\I}\int_{\partial M_+} h\omega_+ =0
$$
for  $h\in\mathcal{O}_D(M_+\cup\Gamma)$ since the integrand is
holomorphic in $M_+$, hence (\ref{qibdryD}) reduces to the statement
that
\begin{equation}\label{duality}
\frac{1}{2\pi\I}\int_{\partial M_+} h\omega_- =0
\end{equation}
for  $h\in\mathcal{O}_D(M_+\cup\Gamma)$. At this point we may use a
general duality theorem (also related to Serre duality) going back
to the work of J.~Silva \cite{Silva50} and G.~K\"othe
\cite{Kothe53}, extended by A.~Grothendieck \cite{Grothendieck53}
and, in the form we need it, C.~Auderset \cite{Auderset80}. It
states that the bilinear form in $h$ and $\omega_-$ defined by the
left member of (\ref{duality}) induces (with the path of integration
moved slightly into $M_-$) a non-degenerate pairing
$$
\mathcal{O}_D (M_+\cup\Gamma)/\mathcal{O}_D
(M)\times\mathcal{O}_{-D}^{1,0}({M_-})/\mathcal{O}_{-D}^{1,0}({M})\to\C,
$$
which exhibits each of the quotient spaces as the dual space of the
other.

In view of this (\ref{duality}) implies that
$\omega_-\in\mathcal{O}_{-D}^{1,0}({M})$, in particular that
\begin{equation}\label{omega}
\frac{{f^*} df}{1+f  f^*}=\omega_+
-\omega_-\in\mathcal{O}_{-D}^{1,0}({M_+}\cup \Gamma).
\end{equation}
Since $f$ is meromorphic in a neighborhood of $M_+\cup\Gamma$,
(\ref{omega}) implies that also $f^*$ is meromorphic there, hence
that $f$ actually is meromorphic on all $M$, as was to be proved.

The usage of the general duality theorems above can be replaced by more direct arguments, like applying
(\ref{qi}) to suitable Cauchy kernels. Specifically we may choose, with $\zeta\in M_-$,
$$
h(z)= \Phi (z,\zeta;z_0,\zeta_0) d\zeta,
$$
where the right member is a kernel which in the case of the Riemann
sphere is the usual Cauchy kernel
$$
\Phi (z,\zeta;z_0,\zeta_0) d\zeta=\frac{d\zeta}{\zeta-z}-\frac{d\zeta}{\zeta-z_0},
$$
and which has counterparts with good enough properties on all
compact Riemann surfaces (see \cite{Rodin88}). The point $\zeta_0$
is needed in higher genus. Actually the duality theorem discussed
above can be proved using this kernel.

\end{proof}


\section{Statement of the main result}\label{sec:main}

Let $M$, $M_+$, $\Gamma=\partial M_+$, $f$ be as in
Section~\ref{sec:multi}, more precisely such that $(M_+,f)$ is a
multi-sheeted algebraic domain. To account for the multiplicities of
$f(M_+)$ as a covering of the Riemann sphere we introduce the
integer-valued counting function, or mapping degree,
$$
\rho(z)={\rm card\,}\{\zeta\in M_+: f(\zeta)=z\},
$$
pointwise well-defined for $z\in \P\setminus f(\Gamma)$. It is
understood that points $\zeta$ are counted with the appropriate
multiplicities. Set also
\begin{equation}\label{schwarz}
S=\overline{f\circ J \circ f^{-1}} = f^*\circ f^{-1}.
\end{equation}
This is a multi-valued algebraic function in the complex plane which
contains all local Schwarz functions of $f(\Gamma)$, because for
$z\in f(\Gamma)$ one of the values of $S(z)$ is $\bar{z}$.

We will have to make expressions like $(S(z)-\bar w))^{\rho(z)}$
well-defined, i.e., single-valued, despite $S(z)$ itself being
multi-valued. The natural definition is the following:
$$
(S(z)-\bar w)^{\rho(z)}=(f^*-\bar w)((f-z)|_{M_+}).
$$
Here $(f-z)|_{M_+}$ denotes the restriction of the divisor $(f-z)$
to $M_+$ and the right member then is the multiplicative action of
$f^*-\bar w$ on $(f-z)|_{M_+}$. To spell it out, let
$$
f^{-1}(z)\cap M_+=\{\zeta_1,\dots,\zeta_{\rho(z)}\},
$$
with repetitions according to multiplicities. Then
\begin{equation}\label{schwarzrho}
(S(z)-\bar w)^{\rho(z)}=(f^*(\zeta_1)-\bar
w)\cdot\dots\cdot(f^*(\zeta_{\rho(z)})-\bar w),
\end{equation}
which is a natural definition in view of (\ref{schwarz}).
Clearly $(S(z)-\bar w)^{\rho(z)}$ is an analytic function of $z$ in regions where
$\rho(z)$ is constant.

Now, for the main result, we have two functions which we want to
relate to each other: one is the weighted exponential transform
\begin{equation*}
E_\rho (z,w;a,b) =\exp[\frac{1}{2\pi\I}\int_\P \rho(\zeta)
\biggl(\frac{d\zeta}{\zeta -z}-\frac{d\zeta}{\zeta -a}\biggr)\wedge
\biggl(\frac{d\bar\zeta}{\bar\zeta -\bar
w}-\frac{d\bar\zeta}{\bar\zeta - \bar b}\biggr)],
\end{equation*}
$$
=\exp[\frac{1}{2\pi\I}\int_{M_+}
\biggl(\frac{df}{f -z}-\frac{df}{f -a}\biggr)\wedge
\biggl(\frac{d\bar f}{\bar f -\bar
w}-\frac{d\bar f}{\bar f - \bar b}\biggr)],
$$
which can be viewed as a kind of potential of $\rho$, and the other
is the elimination function,
which is defined by algebraic means and always is a rational
function, namely of the form
\begin{equation*}
\mathcal{E}_{f,f^*} (z, \bar w; a,\bar b)
=\Res(\frac{f-z}{f-a},\frac{f^*-\bar w}{f^*-\bar b})=\frac{Q(z,\bar
w)Q(a,\bar b)}{Q(z,\bar b)Q(a,\bar w)}.
\end{equation*}
The latter expression comes form (\ref{QPR}) together with the observation that the one variable polynomials
cancel in the four variable case.
The nature of $E_\rho (z,w,a,b)$ depends on the locations
of the points $z$, $w$, $a$, $b$, more precisely on the the values
of $\rho$ at these points. The main result is the following.


\begin{theorem}\label{thm:EmathcalE}
Let $(M_+,f)$ be a multi-sheeted algebraic domain. Then,
in the above notations,
$$
E_\rho (z,w; a,b)=\mathcal{E}_{f,f^*} (z, \bar{w}; a, \bar{b})\cdot
$$
$$
\cdot\left(\frac{\bar{z}-\bar{w}}{S(z)-\bar{w}}\right)^{\rho({z})}
\left(\frac{{w}-{z}}{\overline{S(w)}-{z}}\right)^{\rho({w})}
\left(\frac{\bar{a}-\bar{b}}{S(a)-\bar{b}}\right)^{\rho({a})}
\left(\frac{{b}-{a}}{\overline{S(b)}-{a}}\right)^{\rho({b})}\cdot
$$
$$
\cdot\left(\frac{S({z})-\bar{b}}{\bar{z}-\bar{b}}\right)^{\rho({z})}
\left(\frac{\overline{S({w})}-{a}}{{w}-{a}}\right)^{\rho({w})}
\left(\frac{S({a})-\bar{w}}{\bar{a}-\bar{w}}\right)^{\rho({a})}
\left(\frac{\overline{S({b})}-{z}}{{b}-{z}}\right)^{\rho({b})}.
$$
\end{theorem}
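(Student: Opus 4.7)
The strategy is to reduce the area integral defining $\log E_\rho$, by successive applications of Stokes' theorem, to the line integral over the $1$-chains $\sigma_{f^*-\bar w}$ and $\sigma_{f^*-\bar b}$ that computes $\log\mathcal{E}_{f,f^*}(z,\bar w;a,\bar b)$ via (\ref{elimint}), picking up the eight satellite factors as boundary and residue contributions along the way. Throughout, let $\alpha=\frac{df}{f-z}-\frac{df}{f-a}$, $\beta=\log\frac{\bar f-\bar w}{\bar f-\bar b}$ (a current on $M_+$), and $\beta^*=\log\frac{f^*-\bar w}{f^*-\bar b}$ (a current on $M$ with branch cuts along $\sigma_{f^*-\bar w}$ and $\sigma_{f^*-\bar b}$).

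\textbf{Producing the four $(z,a)$-satellites.} From $\alpha\wedge d\beta=-d(\beta\alpha)+\beta\, d\alpha$ and Stokes on $M_+$, the defining integral of $\log E_\rho$ becomes a boundary integral $-\frac{1}{2\pi\I}\int_\Gamma\beta\alpha$ plus residue terms: $d\alpha$ is $2\pi\I$ times a sum of point masses at $f^{-1}(z)\cap M_+$ and $f^{-1}(a)\cap M_+$ (totals $\rho(z)$ and $\rho(a)$), where $\beta$ takes the constant values $\log\frac{\bar z-\bar w}{\bar z-\bar b}$ and $\log\frac{\bar a-\bar w}{\bar a-\bar b}$. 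Since $\bar f=f^*$ on $\Gamma$ (because $J|_\Gamma=\mathrm{id}$), $\beta|_\Gamma=\beta^*|_\Gamma$; applying Stokes once more to $\int_\Gamma\beta^*\alpha=\int_{M_+}d(\beta^*\alpha)$ and exploiting the key vanishing $\alpha\wedge df^*=0$ (wedge of two $(1,0)$-forms), the smooth part of $d\beta^*\wedge\alpha$ disappears and only the delta currents along $\sigma_{f^*-\bar w}\cap M_+$ and $\sigma_{f^*-\bar b}\cap M_+$ survive, thanks to the decomposition (\ref{dLogf}). Simultaneously, the residues of $\beta^*\, d\alpha$ at the preimages of $z$ and $a$ in $M_+$ can, by (\ref{schwarzrho}), be summed to $\rho(z)\log\frac{S(z)-\bar w}{S(z)-\bar b}$ and its $a$-analog. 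Combining yields exactly the four satellite factors indexed by $\rho(z)$ and $\rho(a)$, plus a residual partial-chain integral $\int_{\sigma_{f^*-\bar w}\cap M_+}\alpha-\int_{\sigma_{f^*-\bar b}\cap M_+}\alpha$.

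\textbf{The elimination function and the remaining satellites.} The full-chain integrals $\int_{\sigma_{f^*-\bar w}}\alpha-\int_{\sigma_{f^*-\bar b}}\alpha$ over $M$ equal $\log\mathcal{E}_{f,f^*}(z,\bar w;a,\bar b)$ by (\ref{elimint}), so it remains to identify the $M_-$-portions of these chains with the $\rho(w), \rho(b)$-indexed satellite factors. Pulling back by $J$ transports $\sigma_{f^*-\bar w}\cap M_-$ to a $1$-chain in $M_+$ with boundary $(f-w)|_{M_+}$, and since $J^*\alpha=\overline{\hat\alpha}$ with $\hat\alpha=\frac{df^*}{f^*-\bar z}-\frac{df^*}{f^*-\bar a}$, the same Stokes-plus-residue procedure carried out on $M_+$—now with the roles of $(z,a)$ and $(w,b)$ swapped and $f$ replaced by $f^*$—produces the remaining four factors indexed by $\rho(w)$ and $\rho(b)$. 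The quantities $\overline{S(w)}$ and $\overline{S(b)}$ appear in place of $S(w), S(b)$ precisely because of the complex conjugation built into $f^*=\overline{f\circ J}$. The main obstacle is the careful bookkeeping—orientations induced by the anticonformal $J$, the free choice of $1$-chains $\sigma_{f^*-\bar w}$ and $\sigma_{f^*-\bar b}$ (which in general cross $\Gamma$), and the matching of logarithmic branches—so that each of the eight satellite factors emerges with the precise exponent $\rho$ at the right variable.
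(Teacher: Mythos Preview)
Your approach shares the essential computational engine with the paper's proof—the successive Stokes integrations, the key boundary identity $\bar f=f^*$ on $\Gamma$, the vanishing of the $(1,0)\wedge(1,0)$ part so that only the $dH$-current survives, and the endpoint evaluations that produce the $S(z)$ and $\overline{S(w)}$ factors via (\ref{schwarzrho}). In fact your chain of manipulations is essentially the paper's computation run in reverse (starting from $E_\rho$ rather than from $\mathcal{E}_{f,f^*}$).

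The genuine difference is strategic. The paper does \emph{not} attempt the global computation for arbitrary $\rho(z),\rho(w),\rho(a),\rho(b)$. Instead it first proves a rationality lemma: using $\partial C_\rho/\partial\bar z=\rho(z)(\frac{1}{\bar z-\bar w}-\frac{1}{\bar z-\bar b})$ and a continuity argument across $f(\Gamma)$, it shows that $E_\rho(z,w;a,b)$ multiplied by the eight satellite factors is rational in $z,\bar w,a,\bar b$. This reduces the problem to verifying the identity \emph{locally}, and the paper then chooses $z,w,a,b$ close together with $\rho(z)=\rho(w)=\rho(a)=\rho(b)=1$. In that regime $f^{-1}(\gamma)\cap M_+$ is a single short arc $\sigma$, its reflection $\tilde\sigma=J(\sigma)$ sits cleanly in $M_-$, and the endpoint evaluation $\exp\int_{\tilde\sigma}\alpha=[\frac{f-z}{f-a}]_{\widetilde{f^{-1}(b)}}^{\widetilde{f^{-1}(w)}}$ is immediate.

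What this buys the paper is precisely the avoidance of the bookkeeping you flag at the end: for general $\rho$, the chains $\sigma_{f^*-\bar w}$ need not decompose neatly into $M_+$ and $M_-$ pieces, the crossings with $\Gamma$ have to be tracked, and your assertion that $J(\sigma_{f^*-\bar w}\cap M_-)$ has boundary exactly $(f-w)|_{M_+}$ requires a specific choice of chain and ignores the zeros of $f^*-\bar w$ already lying in $M_+$. Your approach can be made rigorous with enough care, but the paper's rationality reduction sidesteps all of this and is the cleaner route.
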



\section{Proof of the main result}\label{sec:proof}

Extending (\ref{dbarCauchy1}) to four variables gives
$$
\frac{\partial C_\rho (z,w;a,b)}{\partial \bar z}=\frac{\rho(z)}{\bar z-\bar w}-\frac{\rho(z)}{\bar z-\bar b},
$$
which tells that the function
$$
C_\rho(z,w;a,b)+\rho (z)\log\frac{\bar z-\bar b}{\bar z-\bar w}
$$
is analytic in $z$ in regions where $\rho(z)$ is constant, namely in each component of $\P\setminus f(\Gamma)$.
Hence so is the exponential of it, namely
$$
E_\rho (z,w; a,b)
\cdot(\frac{\bar{z}-\bar{b}}{\bar{z}-\bar{w}})^{\rho({z})}.
$$

Augmenting this we have that also the function
$$
F(z)=E_\rho (z,w; a,b)
\cdot\left(\frac{\bar{z}-\bar{b}}{\bar{z}-\bar{w}}\cdot\frac{S(z)-\bar w}{S(z)-\bar b}\right)^{\rho({z})}
$$
is analytic in $\P\setminus f(\Gamma)$, away from poles caused by
the presence of $S(z)$. Now we claim that it is even better than
that: $F(z)$ is meromorphic everywhere, hence is a rational
function. To prove this it is enough to prove that $F(z)$ is
continuous across $f(\Gamma)$. It is well-known that $E_\rho (z,w;
a,b)$ is continuous in $z$ (see for example
\cite{Gustafsson-Putinar98}), so we only have to bother about the
other factor. But it is easy to realize that also this is
continuous: spelling out as in (\ref{schwarzrho}) and assuming for
example that $\rho(z)$ increases by one unit as $f(\Gamma)$ is
crossed at a certain place we find that the factor
$$
\left(\frac{S(z)-\bar w}{\bar z-\bar w}\right)^{\rho({z})}
$$
changes from
$$
\frac{(f^*(\zeta_1)-\bar
w)\cdot\dots\cdot(f^*(\zeta_{\rho(z)})-\bar w)}{(\overline{f(\zeta_1)}-\bar
w)\cdot\dots\cdot(\overline{f(\zeta_{\rho(z)})}-\bar w)}
$$
to
$$
\frac{(f^*(\zeta_1)-\bar w)\cdot\dots\cdot(f^*(\zeta_{\rho(z)})-\bar
w)(f^*(\zeta_{\rho(z)+1})-\bar w)}{(\overline{f(\zeta_1)}-\bar
w)\cdot\dots\cdot(\overline{f(\zeta_{\rho(z)})}-\bar
w)(\overline{f(\zeta_{\rho(z)+1})}-\bar w)},
$$
which clearly is a continuous change since the new point
$\zeta_{\rho(z)+1}$ starts up on $\Gamma$. Similarly for the factor
$$
\left(\frac{\bar z-\bar b}{S(z)-\bar b}\right)^{\rho({z})}
$$

Repeating the above argument for $w$, $a$, $b$ it follows that the
function
$$
E_\rho (z,w; a,b) \cdot
\left(\frac{\bar{z}-\bar{b}}{\bar{z}-\bar{w}}
\cdot\frac{S(z)-\bar{w}}{S(z)-\bar{b}}\right)^{\rho({z})}
\cdot\left(\frac{{w}-{a}}{{{w}}-{z}}
\cdot\frac{\overline{S(w)}-{z}}{{\overline{S(w)}}-{a}}\right)^{\rho({w})}\cdot
$$
$$
\cdot\left(\frac{\bar{a}-\bar{w}}{\bar a-\bar{b}}
\cdot\frac{S(a)-\bar{b}}{S({a})-\bar{w}}\right)^{\rho({a})}
\cdot\left(\frac{{b}-{z}}{b-a}
\cdot\frac{\overline{S(b)}-{a}}{\overline{S({b})}-{z}}\right)^{\rho({b})}
$$
is rational in the variables $z$ $\bar w$, $a$, $\bar b$. Thus,
since also $\mathcal{E}_{f,f^*} (z, \bar{w}; a, \bar{b})$ is
rational in these variables, it is enough to prove that the formula
in the statement of the theorem holds just locally, somewhere. We
may then choose $z$, $w$, $a$, $b$ close to each other, so that in
particular $\rho(z)=\rho(w)=\rho(a)=\rho(b)$. In addition we may
assume that this value is the smallest value of $\rho$ occurring on
$\P$. The case that it is zero can be treated exactly as in the
proof of Theorem~6 in \cite{Gustafsson-Tkachev09}, which concerns
the special case $a=b=\infty$.

So let us for example assume that
$\rho(z)=\rho(w)=\rho(a)=\rho(b)=1$ (the general case will be
similar). Thus $f$ attains the values $z$, $w$, $a$, $b$ exactly
once in $M_+$, and these four points on the Riemann sphere are close
to each other. Let $\gamma$ be an arc in $\P$ from $b$ to $w$ (e.g.,
the geodesic arc). Then  the function $z\mapsto \log\frac{z-w}{z-b}$
has a single-valued branch, call it $\Log \frac{z-w}{z-b}$, in
$\P\setminus \gamma$, hence $\Log\frac{f-w}{f-z}$ is single-valued
in $M\setminus f^{-1}(\gamma)$. We may consider $f^{-1}(\gamma)$ as
a $1$-chain, and as such it has the same role for $\Log
\frac{z-w}{z-b}$ as $\sigma_f$ has for $\Log f$ in (\ref{dLogf}), so
that
$$
d\Log \frac{f-w}{f-b}=\frac{df}{f-w}-\frac{df}{f-b}-2\pi\I
dH_{f^{-1}(\gamma)}.
$$

If $f$ has degree $n$, then $f^{-1}(\gamma)$ consists of $n$ small
arcs, one of which is located on $M_+$. Let
$\sigma=f^{-1}(\gamma)\cap M_+$ be that arc and let
$\tilde{\sigma}=J(\sigma)$ be the reflected arc in $M_-$. In the
sequel we shall use $f^{-1}$ in the restricted sense
$f^{-1}=(f|_{M_+})^{-1}$. Thus
$$
\partial \sigma =f^{-1}(w)-f^{-1}(b)=(\frac{f-w}{f-b})|_{M_+},
$$
$$
\partial \tilde{\sigma} =\widetilde{f^{-1}(w)}-\widetilde{f^{-1}(b)}=(\frac{f^*-\bar w}{f^*-\bar b})|_{M_-}.
$$

Using (\ref{elimint}), (\ref{dLogf}) and (\ref{inttau}) we now get
$$
\mathcal{E}_{f,f^*}(z,\bar{w}; a,\bar b)
=\exp[\frac{1}{2\pi \I}\int_{M} (\frac{df}{f-z}-\frac{df}{f-a})\wedge d\,
\Log\frac{f^*-\bar{w}}{f^*-\bar b}]
$$
$$
=\exp[\frac{1}{2\pi \I}\int_{M_+} (\frac{df}{f-z}-\frac{df}{f-a})\wedge d\,
\Log \frac{f^*-\bar{w}}{f^*-\bar b}]\cdot\exp[\int_{\tilde{\sigma}}(\frac{df}{f-z}-\frac{df}{f-a})].
$$
Here we start by rewriting the last factor according to
$$
\exp\int_{\tilde{\sigma}}(\frac{df}{f-z}-\frac{df}{f-a})
=[\frac{f-z}{f-a}]_{\widetilde{f^{-1}(b)}}^{\widetilde{f^{-1}(w)}}
$$
$$
=\frac{\overline{f^*(f^{-1}(w))}-z}{\overline{f^*(f^{-1}(w))}-a}
\cdot\frac{\overline{f^*(f^{-1}(b))}-a}{\overline{f^*(f^{-1}(b))}-z}
=\frac{\overline{S(w))}-z}{\overline{S(w))}-a}\cdot \frac{\overline{S(w))}-a}{\overline{S(w))}-z}.
$$
Note that functions like $\overline{S(w)-z}=\overline{(S(w)-\bar
z)^{\rho(w)}}$ are single-valued in the present case (cf.
(\ref{schwarzrho})).

Next, the first factor can be integrated partially, to become
$$
\exp\frac{1}{2\pi \I}\int_{M_+} (\frac{df}{f-z}-\frac{df}{f-a})\wedge d\,
\Log\frac{f^*-\bar{w}}{f^*-\bar b}
$$
$$
=\exp[-\frac{1}{2\pi \I}\int_{\partial M_+} (\frac{df}{f-z}-\frac{df}{f-a})\,
\Log\frac{f^*-\bar{w}}{f^*-\bar b}]\cdot
$$
$$
\cdot\exp\frac{1}{2\pi \I}\int_{M_+} d\,(\frac{df}{f-z}-\frac{df}{f-a})\,
\Log\frac{f^*-\bar{w}}{f^*-\bar b}                 ]
$$
$$
=\exp[-\frac{1}{2\pi \I}\int_{\partial M_+} (\frac{df}{f-z}-\frac{df}{f-a})\,
\Log\frac{f^*-\bar{w}}{f^*-\bar b}]\cdot
$$
$$
\cdot \exp[\int_{M_+} (\delta_{f^{-1}(z)}-\delta_{f^{-1}(a)})dxdy\,
\Log\frac{f^*-\bar{w}}{f^*-\bar b}]
$$
$$
=\exp[-\frac{1}{2\pi \I}\int_{\partial M_+} (\frac{df}{f-z}-\frac{df}{f-a})\,
\Log\frac{f^*-\bar{w}}{f^*-\bar b}]
\cdot\frac{S(z)-\bar w}{S(z)-\bar b}
\cdot\frac{S(a)-\bar b}{S(a)-\bar w}.
$$

We have to rework all expressions which are not yet in the form appearing in the statement of the theorem.
So we next turn our attention to the first factor in the last obtained expression. This can be rewritten as
$$
\exp[-\frac{1}{2\pi \I}\int_{\partial M_+} (\frac{df}{f-z}-\frac{df}{f-a})\,
\Log\frac{f^*-\bar{w}}{f^*-\bar b}]
$$
$$
=\exp[-\frac{1}{2\pi \I}\int_{\partial M_+} (\frac{df}{f-z}-\frac{df}{f-a})\,
\Log\frac{\bar f-\bar{w}}{\bar f-\bar b}]
$$
$$
=\exp[-\frac{1}{2\pi \I}\int_{ M_+} d\,(\frac{df}{f-z}-\frac{df}{f-a})
\Log\frac{\bar f-\bar{w}}{\bar f-\bar b}]\cdot
$$
$$
\cdot\exp[\frac{1}{2\pi \I}\int_{ M_+} (\frac{df}{f-z}-\frac{df}{f-a})\,
d\,\Log\frac{\bar f-\bar{w}}{\bar f-\bar b}]
$$
$$
=\exp[-\int_{M_+} (\delta_{f^{-1}(z)}-\delta_{f^{-1}(a)})dxdy\,
\Log\frac{\bar f-\bar{w}}{\bar f-\bar b}]\cdot
$$
$$
\cdot\exp[\frac{1}{2\pi\I}\int_{M_+}
\biggl(\frac{df}{f -z}-\frac{df}{f -a}\biggr)\wedge
\biggl(\frac{d\bar f}{\bar f -\bar
w}-\frac{d\bar f}{\bar f - \bar b}\biggr)]\cdot
$$
$$
\cdot\exp\int_{M_+} (\frac{df}{f-z}-\frac{df}{f-a})d\,\overline{H_\sigma}
$$
$$
=\frac{\bar z-\bar b}{\bar z-\bar w}\cdot \frac{\bar a-\bar w}{\bar a-\bar b}
\cdot\exp[\frac{1}{2\pi\I}\int_{M_+}
\biggl(\frac{df}{f -z}-\frac{df}{f -a}\biggr)\wedge
\biggl(\frac{d\bar f}{\bar f -\bar
w}-\frac{d\bar f}{\bar f - \bar b}\biggr)]\cdot
$$
$$
\cdot\exp[-\int_{\sigma} (\frac{df}{f-z}-\frac{df}{f-a})]
$$
$$
=\frac{\bar z-\bar b}{\bar z-\bar w}\cdot\frac{\bar a-\bar w}{\bar a-\bar b}
 \cdot E_\rho(z,w;a,b)
\cdot \frac{w-a}{w-z} \cdot\frac{b-a}{b-z}.
$$

Now putting all the pieces together we obtain the formula in the
statement of the theorem.


\section{Examples}\label{sec:example}

\subsection{The ellipse}

Let $D$ denote the domain inside the ellipse
$$
\frac{x^2}{a^2}+\frac{y^2}{b^2}=1
$$
with semiaxes $a>b>0$ and foci $\pm c=\pm \sqrt{a^2-b^2}$. The
exterior domain $\P\setminus \overline{D}$ is known to be a null
quadrature domain \cite{Sakai81} for the Euclidean metric. For the
spherical metric it is a two point quadrature domain. Indeed, the
Schwarz function for the ellipse is
$$
S(z)=\frac{a^2+b^2}{c^2} z\pm \frac{2ab}{c}\sqrt{z^2-c^2},
$$
and for $h$ holomorphic in $\P\setminus\overline{D}$ and smooth up
to the boundary we have
$$
\frac{1}{2\pi\I} \int_{\P\setminus\overline{D}} h(z) \frac{d\bar
z\wedge dz}{(1+|z|^2)^2} =\sum\res_{z\in \P\setminus\overline{D}}
h(z) \frac{S(z)dz}{1+zS(z)}.
$$
The residues come from the zeros of $1+zS(z)$ in the exterior of the
ellipse, and straight-forward computations show that there are
exactly two such zeros, located on the imaginary axis and more
precisely given by
$$
z=\pm z_0=\pm\frac{1}{\I c}\sqrt{a^2+b^2+2a^2b^2 +2ab\sqrt{1+
a^2+b^2+a^2b^2}}.
$$
Thus we have a quadrature identity of the form
\begin{equation}\label{qiellipse}
\frac{1}{2\pi\I} \int_{\P\setminus\overline{D}} h(z) \frac{d\bar
z\wedge dz}{(1+|z|^2)^2}=c_0 (h(z_0)+h(-z_0)),
\end{equation}
$c_0$ being the residue of $\frac{S(z)dz}{1+zS(z)}$ at $z=\pm z_0$.

The exterior of the ellipse is the conformal image of the unit disk
under the Joukowski map
$$
f(\zeta)=\frac{c^2\zeta^2+(a+b)^2}{2(a+b)\zeta}.
$$
Thus $({\D},f)$, or simply $(\P\setminus\overline{D},z)$ with $z$
denoting the identity function, is a (single-sheeted) algebraic
domain. The same function $f$ maps the exterior of the unit disk
onto a multi-sheeted algebraic domain, i.e.,
$(\P\setminus\overline{\D},f)$ is (or represents) such  a domain. It
covers $D$ twice and $\P\setminus \overline{D}$ once, in other words
the counting function is
$$
\rho(z)=\begin{cases}
      2 \quad {\rm for\,\,} z\in D,\\
      1  \quad {\rm for\,\,} z\in \P\setminus \overline{D}.
     \end{cases}
$$
Since there are several sheets the associated quadrature identity is
best expressed in a form pulled-back to $\P\setminus\overline{\D}$,
i.e., on the form (\ref{qi}). A slightly weaker form is
$$
\frac{1}{2\pi\I} \int_{\P} \rho(z)h(z) \frac{d\bar z\wedge
dz}{(1+|z|^2)^2}=c_1 (h(z_1)+h(-z_1)),
$$
where
$$
\pm z_1=\pm\frac{1}{\I c}\sqrt{a^2+b^2+2a^2b^2 -2ab\sqrt{1+
a^2+b^2+a^2b^2}}
$$
and $c_1$ is the residue of $\frac{S(z)dz}{1+zS(z)}$ at $z=\pm z_1$.
This form is weaker because it only uses test functions $h(z)$ that
take the same values on the two sheets over $D$.


\subsection{Neumann's oval: classification of level curves}

By inversion in the unit circle and a rotation by $90$ degrees (for
convenience) the ellipse transforms into a curve known as Neumann's
oval \cite{Neumann07}, \cite{Neumann08}, \cite{Shapiro92},
\cite{Langer-Singer07} with equation
\begin{equation}\label{neumann}
a^2b^2 (x^2+y^2)^2 -a^2x^2 -b^2y^2 =0.
\end{equation}
The exterior of the ellipse transforms into a bounded domain
$\Omega$, and since inversions and rotations are rigid
transformations with respect to the spherical measure, also $\Omega$
will be a two point quadrature domain for the spherical measure. The
formula is immediately obtained by inversion and rotation of
(\ref{qiellipse}). The domain $\Omega$ also satisfies a quadrature
identity for the Euclidean measure (indeed, both types of quadrature
identities are equivalent to $S(z)$ being meromorphic in $\Omega$).
The latter quadrature identity is somewhat simpler, namely
$$
\frac{1}{2\pi\I}\int_\Omega hd\bar
zdz=\frac{a^2+b^2}{4a^2b^2}(h(-\frac{c}{2ab})+h(\frac{c}{2ab})).
$$

It turns out to be a quite rewarding task to investigate the
algebraic curves corresponding to the level curves of the left
member in (\ref{neumann}), and in particular to determine which of
them correspond to multi-sheeted algebraic domains. Most types of
phenomena which could possibly show up really do show up among these
curves. This task is what we are going to undertake for the
remainder of this section.

In order to simplify a little we first scale so that the quadrature
nodes above become $\pm 1$. This means that $2ab=c$. Then we need
only one parameter (in place of the two, $a$ and $b$), which we take
to be
$$
r=\frac{\sqrt{a^2+b^2}}{c}>1.
$$
The quadrature identity now becomes
\begin{equation}\label{twopointqi}
\int_\Omega h dxdy =\pi r^2 (h(-1)+h(1)),
\end{equation}
holding for all integrable analytic functions $h$ in $\Omega$. Set
$$
Q(z,w)=z^2 w^2-z^2-w^2 -2r^2 zw,
$$
which is the polarized version ($(z,\bar z)$ polarizes into $(z,w)$)
of the left member in (\ref{neumann}). There are exactly two open
sets for which the quadrature identity (\ref{twopointqi}) holds,
namely
$$
\Omega = \{z\in \C: Q(z,\bar z)<0\}
$$
and
$$
[\Omega] = \{z\in \C: Q(z,\bar z)<0\}\cup \{0\}.
$$
The latter is just the completion of the former with respect to one
missing point. The domain $\Omega$ (or $[\Omega]$) can be viewed as
two disks glued together, or ``smashed'',  or ``added'', and has
been studied by many authors, for example \cite{Crowdy-Marshall04},
\cite{Langer-Singer07}, \cite{Levine-Peres10}.

To analyse the level curves of $Q(z,\bar z)$ we set, for any
$\alpha\in \R$,
\begin{equation}\label{Qalpha}
Q_\alpha(z,w)=z^2 w^2-z^2-w^2 -2r^2 zw -\alpha.
\end{equation}
Let
$$
Q_\alpha(t,z,w)=z^2 w^2-z^2t^2-w^2 t^2-2r^2 zw t^2 -\alpha t^4
$$
be the corresponding homogenous polynomial. We shall keep $r>1$
fixed and just vary $\alpha$.
The real locus of $Q_\alpha$ in $\mathbb{C}$ is
$$
{\rm loc}_\R \,Q_\alpha=\{z\in\C:  Q(z,\bar z)=\alpha\}.
$$
It represents the intersection with $\{w=\bar z\}$ (``the real'') of
the complex locus in $\C^2$,
$$
{\rm loc}_{\C}\,Q_\alpha= \{(z,w)\in\C^2: Q(z,w)=\alpha\},
$$
which has a natural completion in the projective space $\P_2(\C)$
as
$$
{\rm loc\,}Q_\alpha={\rm loc}_{\P_2(\C)} \,Q_\alpha
= \{(t:z:w)\in\P_2(\C): Q_\alpha (t,z,w)=0\}.
$$
Here $\C^2$ is embedded in $\P_2(\C)$ so that $(z,w)$ corresponds to
$(1:z:w)$. By real points in $\C^2$ we mean points $(z,w)$
satisfying $w=\bar z$, and these are the fixed points of the
involution $J:(z,w)\mapsto (\bar w,\bar z)$. In projective
coordinates the involution is $J:(t:z:w)\mapsto (\bar t:\bar w:\bar z)$.
In addition to the anticonformal involution $J$,
the curve ${\rm loc}_{\C}\,Q_\alpha$ has the conformal symmetries $(z,w)\mapsto (w,z)$ and
$(z,w)\mapsto (-z,-w)$.

Let $(M_\alpha,J_\alpha)$ be the compact symmetric Riemann surface
corresponding to $({\rm loc\,}Q_\alpha, J)$. As point sets they are
identical except for a few singular points on ${\rm loc\,}Q_\alpha$,
which are resolved on $M_\alpha$. The real locus of $Q_\alpha$
corresponds to the fixed point set $\Gamma_\alpha$ of $J_\alpha$.

A first observation is that the function $z\mapsto Q(z,\bar z)$ has
five stationary points in the complex plane. There are two global
minima, on the level $\alpha=-(r^2+1)^2$, there are two saddle
points on the level $\alpha=-(r^2-1)^2$ and there is one local
maximum (at the origin), on the level $\alpha=0$. These three values
of $\alpha$ will correspond to changes of regime for the algebraic
curve $Q_\alpha(z,w)=0$.

Solving the equation $Q_\alpha(z,w)=0$ for $w$ as a function of $z$
gives the Schwarz functions for the curves in the real locus. The
result is
\begin{equation}\label{w(z)}
w=S_\alpha (z)=\frac{1}{z^2-1}(r^2 z\pm\sqrt{z^4+(r^4-1+\alpha)z^2-\alpha}).
\end{equation}
We see that $S_\alpha(z)$ in general has four branch points. The levels $\alpha$ at which
$Q(z,\bar z)$ has stationary points are exactly those values of $\alpha$
for which some or all of these branch points resolve:
for $\alpha=-(r^2\pm 1)^2$ the square root resolve completely into second order polynomials,
and for $\alpha=0$ one pair of branch points resolves.

Let $p=p(\alpha)$ denote the genus of $M_\alpha$, or equivalently of ${\rm
loc\,}Q_\alpha$. The degree of $Q_\alpha$ is four, hence the genus
formula in algebraic geometry \cite{Griffith-Harris}, \cite{Namba84}
tells that
$$
p+ s=\frac{(4-1)\cdot (4-2)}{2}=3,
$$
where $s\geq 0$ is a certain number related to the singular points.
An analysis, carried out in detail in \cite{Gustafsson88}, shows
that ${\rm loc\,}Q_\alpha$ passes through the points $(0:1:0)$ and
$(0:0:1)$ and that it at each of these points has two simple cusps
of multiplicity one with distinct tangent directions. In particular,
the points $(0:1:0)$ and $(0:0:1)$ are singular, and it turns out
that each of them gives the contribution $+1$ to $s$. Except for the
above two points of infinity, ${\rm loc\,}Q_\alpha$ stays in $\C^2$.
Thus, what remains of the genus formula is
\begin{equation}\label{genusformula}
p+e=1,
\end{equation}
where $e$ denotes the contribution to $s$ which comes from finite
singular points. By (\ref{genusformula}), $e\leq 1$, so there is at
most one finite singular point, and this must be visible in the real
because nonreal singular points necessarily come in pairs.

The above analysis preassumes that the curve ${\rm loc\,}Q_\alpha$,
or polynomial $Q_\alpha$, is irreducible. This is the case for most
values of $\alpha$, but there are two exceptions:

$(i)$ For $\alpha=-(r^2-1)^2$, $Q_\alpha$ factors as
$$
Q_\alpha(z,w)=((z+1)(w+1)-r^2)((z-1)(w-1)-r^2).
$$
Each factor defines its own algebraic curve and Riemann surface.
These have genus zero and are moreover symmetric: the zero locus of
each factor is preserved under the involution $(z,w)\mapsto (\bar
w,\bar z)$. The real locus ${\rm loc}_\R \,Q_\alpha$ is the union of
two intersecting circles, those of radius $r$ and centers $\pm 1$.
The intersection points, $z=\pm\sqrt{r^2-1}$ are saddle points for the function
$z\mapsto Q(z,\bar z)$.

$(ii)$ For $\alpha=-(r^2+1)^2$, $Q_\alpha$ factors as
$$
Q_\alpha(z,w)=((z+1)(w-1)-r^2)((z-1)(w+1)-r^2),
$$
where again each factor defines its own algebraic curve and Riemann
surface of genus zero. However, in the present case they are not
symmetric, instead the involution maps each of these Riemann
surfaces onto the other. The real locus consists only of the two
points $\pm r$. This can easily be understood by observing that the
value $\alpha=-(r^2+1)^2$ is the global infimum of $Q(z,\bar z)$.

In both of the reducible cases Bezout's theorem says that there
should be four points of intersection between the two curves (since
these have degree two). These intersection points are the two points
of infinity $(0:1:0)$ and $(0:0:1)$ plus, in the first case the
intersection points of the two circles (in the real), and in the
second case the two points $\pm r$.

Besides the above two special values of $\alpha$, also the
quadrature value $\alpha=0$ is exceptional. This is a local maximum
value for $Q(z,\bar z)$. The local maximum is attained at $z=0$, and
the corresponding point $(0,0)$ on the algebraic curve is a singular
point (since both partial derivatives of $Q$ vanish there). Thus
$e=1$ in the genus formula (\ref{genusformula}), hence $p=0$.

We now embark the full classification. Pictures for the case
$r=\sqrt{2}$ with $\alpha=3, -0.5,-1,-1.5$ are shown in
figures~\ref{picture1} and \ref{picture2}, where the shaded areas
are the sets where $Q_\alpha (z,\bar z)<0$.

\begin{itemize}

\item For $\alpha>0$,
$\Gamma_\alpha$ has one component and there are no singular points
visible in the real. Therefore, by (\ref{genusformula}) and the
remark following it $M_\alpha$ has genus one. This means that the
symmetry line $\Gamma_\alpha$ is not able to separate $M_\alpha$
into two halves (see more precisely discussions in Section~2.2 in
\cite{Schiffer-Spencer54}). Thus $M_\alpha\setminus \Gamma_\alpha$
has only one component, and it will not generate any algebraic
domain (even multi-sheeted), despite the nice picture in the real,
with a smooth algebraic curve bounding a simply connected region
(figure~\ref{picture1}, left). $M_\alpha$ can be viewed as the
double of a M\"obius band.

\item At $\alpha=0$ the genus of $M_\alpha$ collapses to zero, $\Gamma_\alpha$
has still only one component even though ${\rm loc}_\R \,Q_\alpha$
has gotten an additional point, $(0,0)$, which is a singular point
of ${\rm loc\,}Q_\alpha$ (resolved on $M_\alpha$). It follows that
$M_\alpha\setminus \Gamma_\alpha$ has two components, one of which,
say $M_+$, is mapped conformally onto the quadrature domain
$[\Omega]$ by the analytic function $f$ which corresponds to the
projection $(z,w)\mapsto z$ on ${\rm loc\,}Q_\alpha$.

\item For $-(r^2-1)^2<\alpha<0$ the genus of $M_\alpha$ is again one, and the
singular point $(0,0)$ in the previous case has now grown up to a
curve, hence $\Gamma_\alpha$ has two components. Also
$M_\alpha\setminus \Gamma_\alpha$ has two components, say $M_{\pm}$,
and $M_\alpha$ can be viewed as the double of $M_+$ (or $M_-$),
which topologically is an annulus.

The meromorphic function $f:M_\alpha\to \P$ which corresponds to
$(z,w)\mapsto z$ on ${\rm loc\,}Q_\alpha$ is however no longer
univalent (not even locally univalent) on what corresponds to $M_+$
in the previous case. Therefore this gives a now only multi-sheeted
algebraic domain, with $f(M_+)$ consisting of a main piece which
contains the points $\pm 1$ and the origin (the dashed area in
figure~\ref{picture1}, right) plus a smaller piece around the origin
(the bounded undashed region). The latter piece thus is covered
twice, and the two sheets are connected via two branch points of the
Schwarz function.

\item For $\alpha=-(r^2-1)^2$ the curve is reducible, hence $M_\alpha$ is the
union of two independent Riemann surfaces, both of genus zero and
symmetric under $J_\alpha$. In the real locus ${\rm loc}_\R
\,Q_\alpha$ we simply have two intersecting circles, those centered
at $\pm 1$ and having radius $r$ (figure~\ref{picture2}, left).
Explicitly:
$$
Q_\alpha (z,\bar z)= (|z-1|^2-r^2)(|z+1|^2-r^2).
$$
$M_\alpha$ is the union of two Riemann spheres and can be viewed as
the double of two disks.

\item For $-(r^2+1)^2<\alpha<-(r^2-1)^2$ we are back to the case of genus
one with $\Gamma_\alpha$ and $M_\alpha\setminus \Gamma_\alpha$ both
having two components. However, the situation has changed in the
sense that the involution goes the other way (like $z\mapsto -\bar
z$ in place of $z\mapsto \bar z$ in a right-angled period
parallelogram), it may be more natural in this case to think of
$M_\alpha$ as the double of a cylinder than as the double of an
annulus (even though these two types of domains are topologically
equivalent). The real locus consists of two closed curves, one
enclosing two branch points close to $z=1$, the other enclosing two
branch points close to $z=-1$, and with $f:M_\alpha\to \P$ as
before, $f$ maps $M_+$ (say) onto the dashed region to the right in
figure~\ref{picture2} (right) covered twice and the (unbounded)
undashed region covered once.

\item When $\alpha=-(r^2+1)^2$ the two closed curves in the real locus
of the previous case have shrunk to two points, the minimum points $z=\pm 1$ of
$Q(z,\bar z)$. The curve is reducible and $M_\alpha$ hence is
the disjoint union of two Riemann surfaces (of genus zero), but
these are not symmetric under $J$. Instead the involution $J$ maps each of
them onto the other. The algebraic curve ${\rm loc}_\R \,Q_\alpha$
consists of two pieces, which meet each other in two points of
tangency. This is what is seen in the real locus.
Thus $M_\alpha$ can be viewed as the double of the Riemann sphere, and $J_\alpha$
has no fixed points.

\item When $\alpha<-(r^2+1)^2$ finally, there is no real locus at all.
The genus is one, but now $J$ has no fixed points at all
($\Gamma_\alpha$ is empty). Therefore no algebraic domain (even
multisheeted) can be associated to this case. $M_\alpha$ can be
viewed as the double of Klein's bottle (see again
\cite{Alling-Greenleaf71}, and also \cite{Schiffer-Spencer54} for
doubles of nonorientable surfaces).

\end{itemize}

We summarize the discussions as follows.

\begin{proposition}

The surface $M_\alpha$ (possibly disconnected) can be viewed as the
double of the Klein surface $N_\alpha=M_\alpha/J_\alpha$, which in
the different regimes of $\alpha\in\R$ is of the following
topological type.

\begin{itemize}

\item For $\alpha>0$: a M\"obius band

\item For $\alpha=0$: a disk.

\item For $-(r^2-1)<\alpha<0$: an annulus.

\item For $\alpha=-(r^2-1)^2$: two disjoint disks.

\item For $-(r^2+1)^2<\alpha<-(r^2-1)^2$: a cylinder.

\item For $\alpha=-(r^2+1)^2$: a sphere.

\item For $\alpha<-(r^2+1)^2$: a Klein's bottle.

\end{itemize}

The pair $(M_+, f)$ defines a single-sheeted algebraic domain
for $\alpha=0$ and a multi-sheeted algebraic domain for $-(r^2-1)^2<\alpha<0$ and $-(r^2+1)^2<\alpha<-(r^2-1)^2$.
For $\alpha=-(r^2-1)^2$ it defines two single-sheeted algebraic domains (which intersect in the complex plane).

The exponential transform of the above multi-sheeted algebraic
domains is given by Theorem~\ref{thm:EmathcalE}, where the
elimination function is
$$
\mathcal{E}_{f,f^*} (z, \bar w; a,\bar b)=\frac{Q_\alpha(z,\bar w)Q_\alpha(a,\bar b)}{Q_\alpha(z,\bar b)Q_\alpha(a,\bar w)}
$$
with $Q_\alpha$ as in (\ref{Qalpha}) and the Schwarz function is given by (\ref{w(z)}).

\end{proposition}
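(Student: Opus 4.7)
The plan is a case analysis across the seven regimes of $\alpha$ listed in the statement, using four tools already developed in the preceding discussion: the catalogue of stationary values of $z\mapsto Q(z,\bar z)$, the genus formula $p+e=1$ from (\ref{genusformula}), the explicit Schwarz function (\ref{w(z)}), and the reducibility factorizations (i) and (ii). In each regime I tabulate three quantities: the number of components of $\Gamma_\alpha$, the contribution $e$ from finite singular points of $\mathrm{loc\,}Q_\alpha$, and whether $M_\alpha\setminus\Gamma_\alpha$ is connected. The only finite singular point in the irreducible range is $(0,0)$ at $\alpha=0$, so $e=1$ there and $e=0$ generically, yielding genus $p=0$ at $\alpha=0$ and $p=1$ elsewhere in the irreducible range. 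The reducible cases $\alpha=-(r^2\mp 1)^2$ give two rational components, either both preserved by $J_\alpha$ (case (i)) or interchanged by $J_\alpha$ (case (ii)).

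With $M_\alpha$ identified topologically, I would pass to the quotient $N_\alpha=M_\alpha/J_\alpha$ using the standard classification recalled in Section~2.2 of \cite{Schiffer-Spencer54} and in \cite{Alling-Greenleaf71}: a single separating oval on a sphere gives a disk, two separating ovals on a sphere give two disjoint disks, a single nonseparating oval on a torus gives a M\"obius band, two separating ovals on a torus give an annulus or a cylinder, a fixed-point-free involution on a torus gives a Klein bottle, and two spheres swapped by $J_\alpha$ give a single sphere quotient. Matching these with the data tabulated above produces the claimed topological type of $N_\alpha$ in each of the seven regimes.

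For the algebraic-domain assertions, I take $M_+$ to be one of the components of $M_\alpha\setminus\Gamma_\alpha$ and $f$ to be the meromorphic function of degree two induced by the projection $(z,w)\mapsto z$. By Definition~\ref{def:multi} the pair $(M_+,f)$ is then automatically a multi-sheeted algebraic domain whenever $M_+$ exists. Single-sheetedness at $\alpha=0$ follows because $f$ maps $M_+$ conformally onto $[\Omega]$; at $\alpha=-(r^2-1)^2$ each irreducible factor parametrizes univalently a round disk of radius $r$ centered at $\pm 1$; and in the two open intermediate regimes one reads off from (\ref{w(z)}) that $S_\alpha$ has genuine branch points inside $f(M_+)$, so $f|_{M_+}$ fails to be injective and the associated domain is genuinely multi-sheeted. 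The exponential-transform formula is then an immediate specialization of Theorem~\ref{thm:EmathcalE}, using the fact that $f^*=\overline{f\circ J}$ is the second projection after conjugation, so that $Q_\alpha(f,f^*)\equiv 0$ and the general form (\ref{QPR}) collapses to the stated four-variable quotient.

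The main obstacle is the topological step in the three irreducible genus-one regimes: deciding whether $\Gamma_\alpha$ is separating. For $\alpha>0$ the single oval must be shown to be nonseparating on the torus $M_\alpha$, while for $\alpha$ slightly below $0$ and for $\alpha\in(-(r^2+1)^2,-(r^2-1)^2)$ the two ovals must be shown to separate. The cleanest approach I see is to track the four branch points of the two-valued Schwarz function (\ref{w(z)}) through each critical value $\alpha\in\{0,-(r^2-1)^2,-(r^2+1)^2\}$, noting how pairs of branch points collide and then reopen; this pins down both the homology class of $\Gamma_\alpha$ in $H_1(M_\alpha,\Z)$ and the component count of $M_\alpha\setminus\Gamma_\alpha$, after which the rest of the classification is bookkeeping.
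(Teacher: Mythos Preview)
Your proposal is correct and follows essentially the same route as the paper: the proposition is explicitly presented there as a summary of the preceding itemized case analysis, which uses exactly the four tools you list (stationary levels of $Q(z,\bar z)$, the genus count $p+e=1$, the Schwarz function, and the two reducibility factorizations) to determine in each regime the genus of $M_\alpha$, the number of ovals in $\Gamma_\alpha$, and the component structure of $M_\alpha\setminus\Gamma_\alpha$, then reads off $N_\alpha$ from the standard dictionary in \cite{Schiffer-Spencer54} and \cite{Alling-Greenleaf71}.

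The one point where you and the paper diverge in method is precisely the obstacle you flag: deciding, in the irreducible genus-one regimes, whether $\Gamma_\alpha$ separates $M_\alpha$. You propose to track the motion and collisions of the four branch points of (\ref{w(z)}) across the critical levels and extract the homology class of $\Gamma_\alpha$ from that. The paper instead invokes the general structure theory of symmetric Riemann surfaces (Section~2.2 of \cite{Schiffer-Spencer54}): on a torus, a single oval fixed by an anticonformal involution cannot separate (orientability of the quotient would force the oval count to be even), hence the M\"obius band for $\alpha>0$; with two ovals the quotient is orientable, and the paper then distinguishes the annulus from the cylinder by observing that the involution ``goes the other way'' (e.g.\ like $z\mapsto -\bar z$ versus $z\mapsto \bar z$ on a rectangular lattice) once the two ovals have passed through the reducible configuration at $\alpha=-(r^2-1)^2$. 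Your branch-point argument is more hands-on and self-contained; the paper's is shorter but leans on the cited classification. Either is adequate here.
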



\newpage
\begin{figure}[h]
\centering
\includegraphics[width=0.4\textwidth]{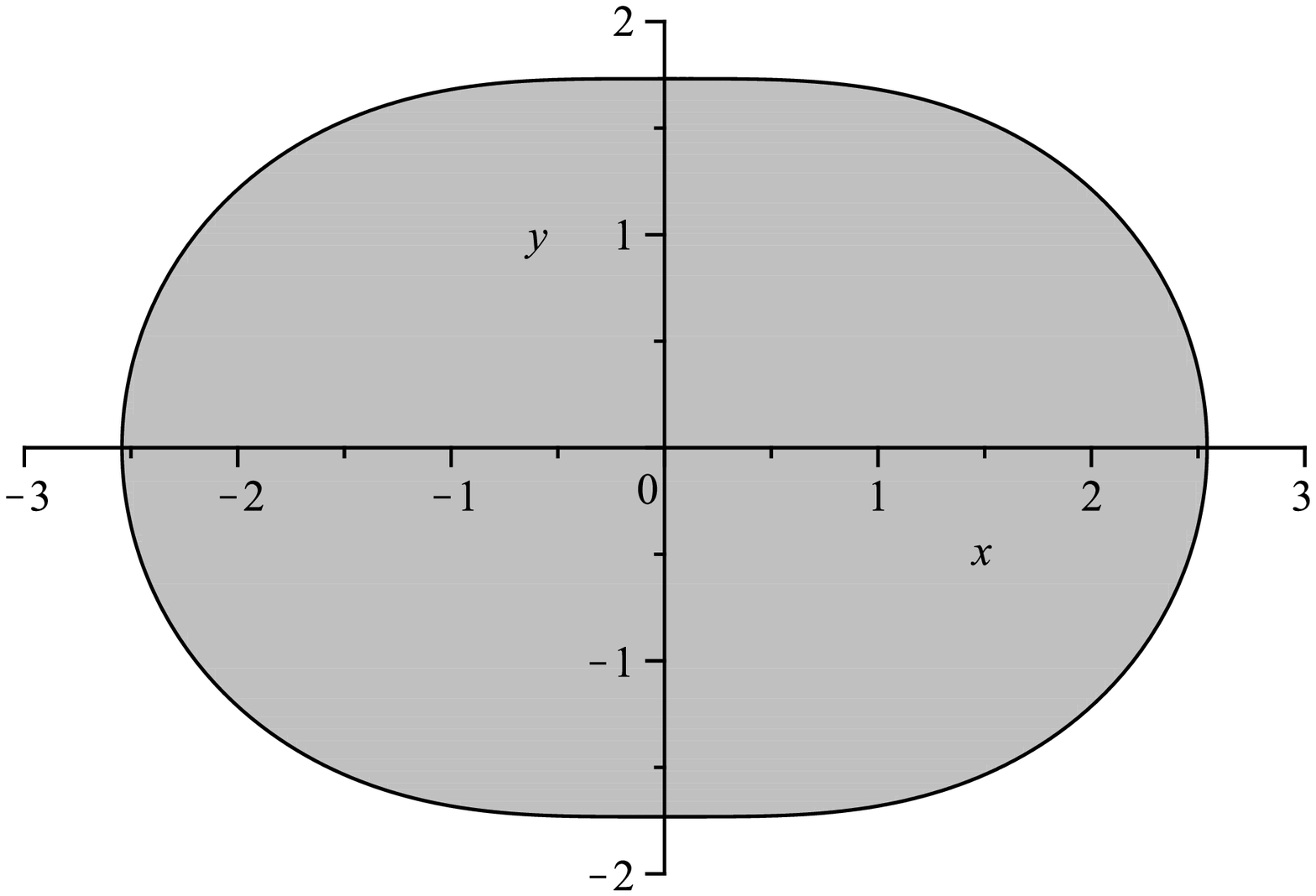}
\includegraphics[width=0.4\textwidth]{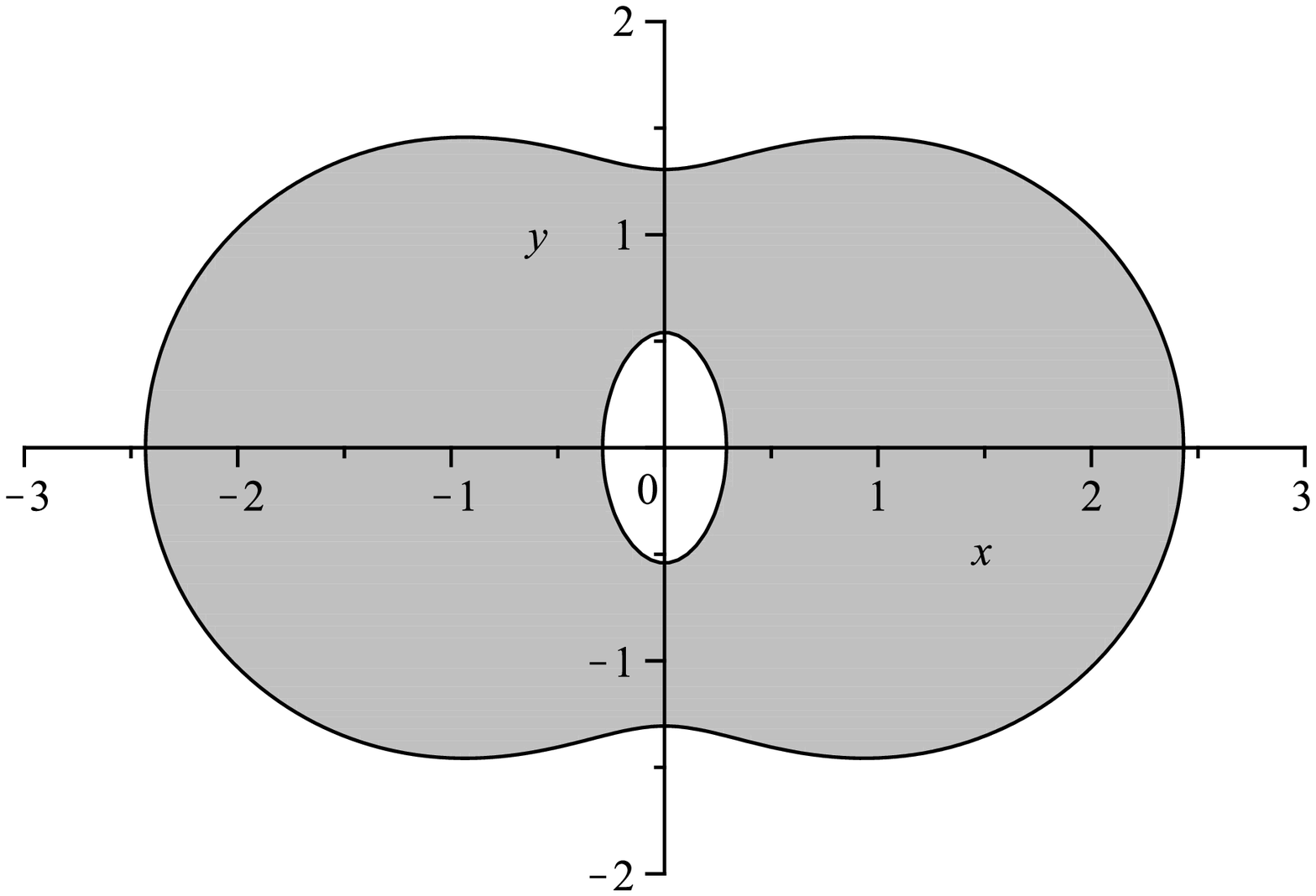}

 \caption{The level set $\Gamma_\alpha$ for $r=\sqrt{2}$ and for $\alpha=3$ and $\alpha=-0.5$}
 \label{picture1}
\end{figure}

\begin{figure}[h]
\centering
\includegraphics[width=0.4\textwidth]{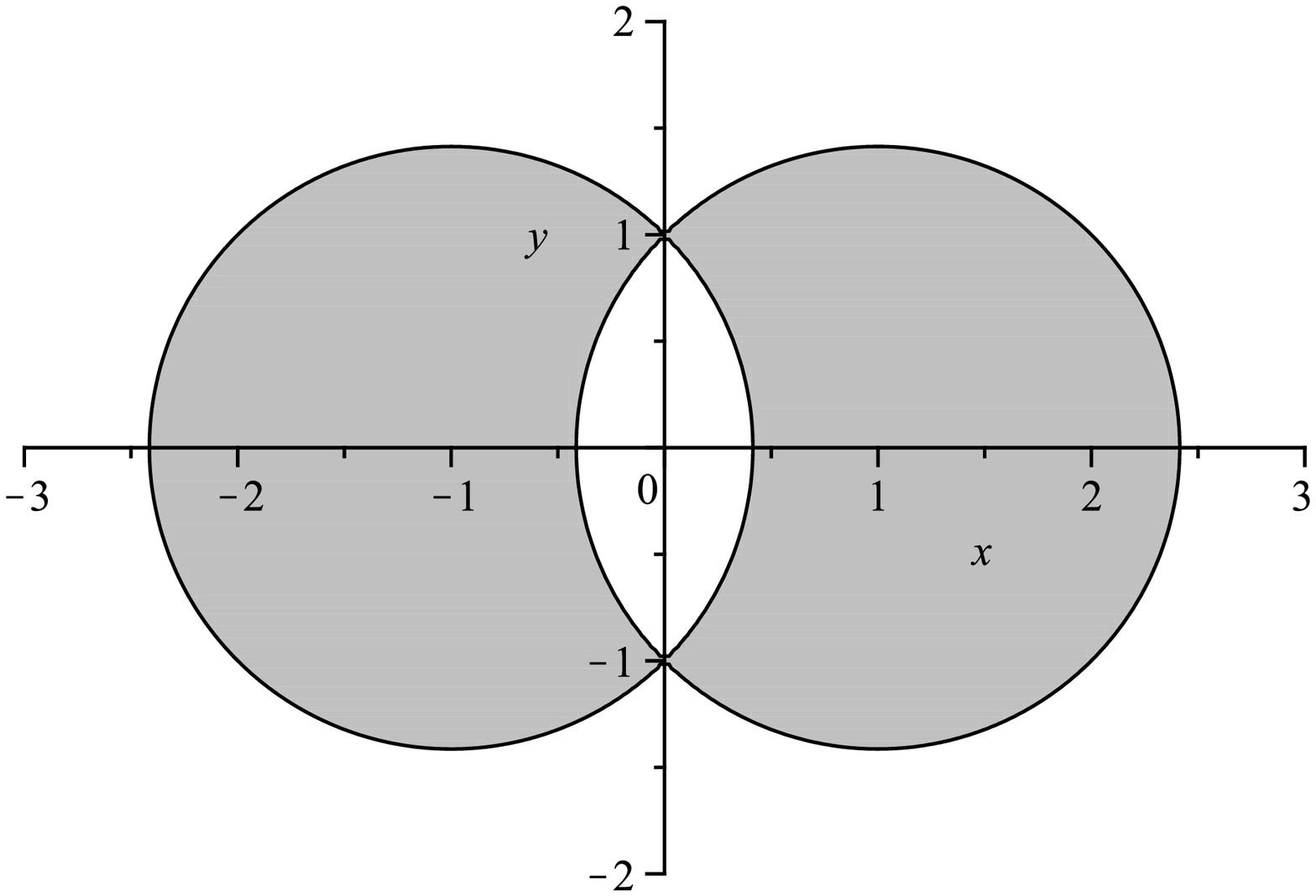}
\includegraphics[width=0.4\textwidth]{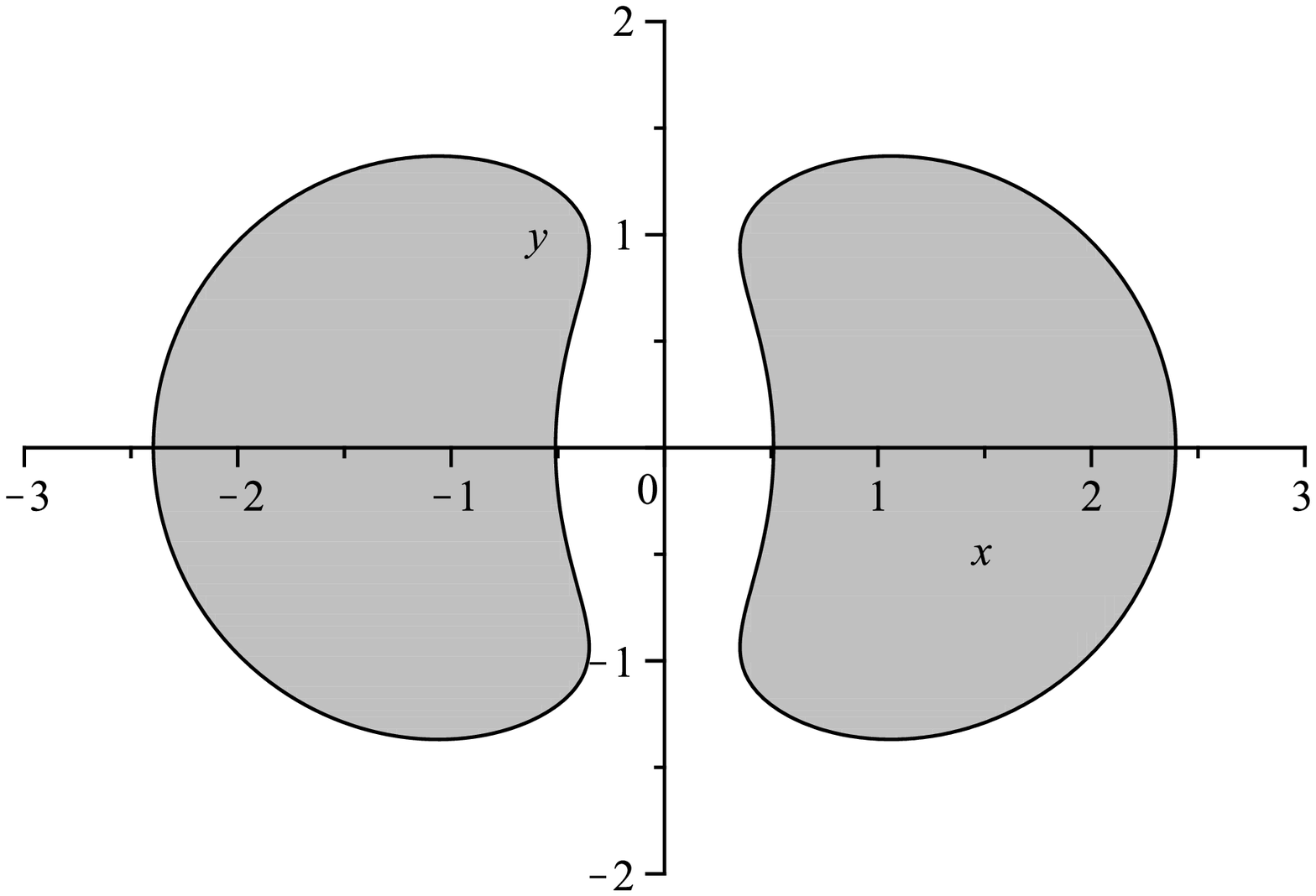}

 \caption{The level set $\Gamma_\alpha$ for $\alpha=-1$ and $\alpha=-1.5$}
 \label{picture2}
\end{figure}


\end{document}